\pdfoutput=1
\documentclass[letterpaper, 10 pt, conference]{ieeeconf}  

\IEEEoverridecommandlockouts                              

\overrideIEEEmargins                                      




\usepackage{amsmath}
\usepackage{color}
\usepackage{amssymb}  
\usepackage{amsfonts}
\usepackage{graphicx}
\usepackage{dsfont}
\usepackage{psfrag}
\usepackage{mathtools}
\usepackage[noadjust]{cite}
\usepackage{bbm}
\usepackage{algorithm}
\usepackage{algorithmic}
\usepackage{savesym}
\usepackage{amsthm}
\usepackage[OT2,T1]{fontenc}
\DeclareSymbolFont{cyrletters}{OT2}{wncyr}{m}{n}
\DeclareMathSymbol{\Sha}{\mathalpha}{cyrletters}{"58}

\theoremstyle{definition}
\newtheorem{definition}{Definition}

\newtheorem{remark}{Remark}

\theoremstyle{plain}
\newtheorem{theorem}{Theorem}

\newtheorem{lemma}{Lemma}
\newtheorem{probstat}{Problem}

\DeclareMathOperator*{\argmin}{argmin}

\title{\LARGE \bf
Dealing with State Estimation in Fractional-Order Systems under Artifacts
}

\author{Sarthak Chatterjee$^{\dagger}$ and S\'{e}rgio Pequito$^{\ddagger}$
\thanks{$^{\dagger}$Sarthak Chatterjee is with the department of Electrical, Computer, and Systems Engineering, Rensselaer Polytechnic Institute, Troy, NY 12180, USA
        {\tt\small chatts3@rpi.edu}}%
\thanks{$^{\ddagger}$S\'{e}rgio Pequito is with the Department of Industrial and Systems Engineering, Rensselaer Polytechnic Institute, Troy, NY 12180, USA
        {\tt\small goncas@rpi.edu}}%
}

\begin{document}

\maketitle
\thispagestyle{empty}
\pagestyle{empty}

\begin{abstract}

Fractional-order dynamical systems are used to describe processes that exhibit long-term memory with power-law dependence. Notable examples include complex neurophysiological signals such as electroencephalogram (EEG) and blood-oxygen-level dependent (BOLD) signals. When analyzing different neurophysiological signals and other signals with different origin (for example, biological systems), we often find the presence of artifacts, that is, recorded activity that is due to external causes and does not have its origins in the system of interest. In this paper, we consider the problem of estimating the states of a discrete-time fractional-order dynamical system when there are artifacts present in some of the sensor measurements. Specifically, we provide necessary and sufficient conditions that ensure we can retrieve the system states even in the presence of artifacts. We provide a state estimation algorithm that can estimate the states of the system in the presence of artifacts. Finally, we present illustrative examples of our main results using real EEG data.


\end{abstract}

\section{Introduction}

Fractional-order dynamical systems (FODS) have been successfully used to accurately model dynamics which undergo nonexponential power-law decay, and have long-term memory or fractal properties \cite{moon2008chaotic,lundstrom2008fractional,werner2010fractals,turcott1996fractal,thurner2003scaling,teich1997fractal}. In particular, fractional-order models have been used in domains such as gas dynamics \cite{chen2010anomalous}, viscoelasticity \cite{jaishankar2012power}, chaotic systems \cite{petravs2011fractional}, and biological swarming \cite{west2014networks}, just to mention a few. With the advent of cyber-physical systems (CPS), the usefulness of FODS becomes even more apparent, since we have to model the relationship between the spatial and temporal evolution of complex networks \cite{xuecps}, \cite{xue2017reliable}.
\par Inspired by the recent spate of application of FODS to model the spatiotemporal properties of complex physiological signals such as electroencephalogram (EEG), electrocardiogram (ECG), electromyogram (EMG), and blood-oxygen-level dependent (BOLD) imaging \cite{magin2006fractional}, \cite{baleanu2011fractional} in the context of neurophysiological applications, the estimation of the states of a fractional-order dynamical system plays a key role in assessing how the corresponding states are evolving, which unveil information about brain function. We are particularly interested in EEG signals, that are known to be prone to disturbances which are not cerebral in origin, which are known as \emph{artifacts} in the neuroscience literature \cite{britton2016electroencephalography}. There are innumerable sources for these artifacts, some of the common ones being artifacts due to the blinking of the eye (called blink artifact), the potential difference between the EEG electrode and its lead (called electrode pop artifact), and muscular motions of chewing or swallowing (called glossokinetic artifact). Therefore, it becomes imperative to develop a set of tools which can robustly estimate the dynamics of the EEG signals even in the presence of these artifacts. Note that although the term artifacts is almost always used to describe behavior which is not of a cerebral nature in EEG signals, the notion of artifacts can be readily extended to characterize the disturbances that are not consistent with the stationary evolution of the system's dynamics - in particular FODS.
\par In this paper, we develop a systematic framework to estimate the states of a fractional-order dynamical system when artifacts are associated with the measurement process. To do this, we are inspired by recent research on resilient state estimation of dynamical systems in the presence of attacks. For instance, \cite{shoukry2017secure,pajic2014robustness,pajic2017attack,hu2018state,mishra2015secure,shi2017finite,an2018secure,mo2015secure,forti2017secure,pasqualetti2013attack,fawzi2014secure,ding2018survey,fawzi2011,mo2014resilient} deal with the problem of identification and state estimation of cyber-physical systems in the presence of these attacks. Notice that artifacts are mainly `attacks by nature', which we try to cope in the context of FODS, and are not assumed to follow any stochastic process or dynamics. Hence, inspired by the techniques outlined in the former, we develop a method to  estimate the states of a fractional-order dynamical system in the presence of artifacts, which, to the best of our knowledge, has not been previously analyzed in the context of FODS.
\par A few works already exist in the domain of state estimation of fractional-order systems \cite{sabatier2012observability,sierociuk2006fractional,Safari1,Safari2,miljkovic2017ecg}. However, most of these assume the noise associated with the measurement process to have a certain stochastic characterization. The work that comes closest to ours is \cite{miljkovic2017ecg}, but even that outlines a method for artifact correction using fractional calculus and a median filter.
\par The main theoretical contributions of our paper are as follows. First, we provide necessary and sufficient conditions that ensure the estimation of the states of a discrete-time fractional-order dynamical system in the presence of arbitrary artifacts. Next, we propose an algorithm that can estimate the states in the presence of artifacts in the sensor measurements. Finally, we borrow techniques of compressive sensing to improve the algorithm such that it becomes computationally feasible.
\par The remainder of the paper is organized as follows. Section~\ref{sec:probstat} introduces the elements of the state estimation problem. Section~\ref{sec:mainres} presents necessary and sufficient conditions that guarantee the estimation of states of FODS in the presence of artifacts. Sections~\ref{sec:estim} and~\ref{sec:better_estim} consider the problems of constructing the state estimator using the techniques of compressive sensing. Finally, in Section~\ref{sec:examples}, we present illustrative examples showing the performance of our algorithm on a pedagogical example, as well as real data collected from a wearable EEG device.

\section{Problem Statement}\label{sec:probstat}

\subsection{Notation}
Throughout this paper, we use the following notations. Given a set $S$, we denote its cardinality by $|S|$, and its complement by $S^c = U \setminus S$ (the universal set $U$ will be clear from the context of the discussion). If $x \in \mathbb{R}^n$ is a vector, the \textit{support} of $x$, denoted by $\mathsf{supp}(x)$, is the set of nonzero components of $x$
\[ \mathsf{supp}(x) = \{ i \in \{1, \ldots, n\} \: | \: x_i \neq 0 \}. \]
The number of nonzero elements in $x$ will be denoted by $\|x\|_{\ell_0}$, that is
\[ \|x\|_{\ell_0} = |\mathsf{supp}(x)|. \]
Further, if $K \subset \{1,\ldots,n\}$, we define $\mathcal{P}_{K}$ to be the projection map onto the components of $K$.
Also, given a matrix $M \in \mathbb{R}^{m \times n}$, the $i$-th row of $M$, $i \in \{1,\ldots,m\}$ is denoted by $M_i \in \mathbb{R}^n$. The \textit{row support} of $M$ is defined to be the set of nonzero rows of $M$
\[ \mathsf{rowsupp}(M) = \{ i \in \{1,\ldots,m\} \: | \: M_i \neq 0_n \}, \]
where $0_n$ is the vector of zeros with dimension $n$. For matrices, like vectors, the number of nonzero rows of $M$ will be denoted by $\|M\|_{\ell_0}$, and given by
\[ \|M\|_{\ell_0} = |\mathsf{rowsupp}(M)|. \]

\subsection{Problem Statement}
\subsubsection{System Model}
Consider a linear discrete-time fractional-order dynamical model described as follows
\begin{align}\label{eq:frac_model}
    \Delta^\alpha x[k+1] &= Ax[k] \nonumber \\
    y[k] &= Cx[k] + e[k],
\end{align}
where $x \in \mathbb{R}^n$ is the state for time step $k \in \mathbb{N}$ and $y \in \mathbb{R}^p$ is the output vector. $A \in \mathbb{R}^{n \times n}$ is the system matrix and $C \in \mathbb{R}^{p \times n}$ is the sensor measurement matrix. The vector $e \in \mathbb{R}^p$ represents the model of errors, i.e., the \emph{artifacts}. Note that for the special case of EEG signals, these could be some of the EEG artifacts described in the Introduction. If there are no artifacts in channel $i \in \{1,\ldots,p\}$, then $e_i[k] = 0$ and the output $y_i[k]$ of channel $i$ is not corrupted. As a consequence, the sparsity pattern of $e[k]$ gives the set of sensors where there are artifacts present. Note that the system model is similar to a classic discrete-time linear time-invariant model except for the inclusion of the fractional derivative, whose expansion and discretization for the $i$-th state, $1 \leq i \leq n$, can be written as
\begin{equation}\label{eq:frac_deriv}
    \Delta^{\alpha_i} x_i[k] = \sum_{j=0}^k \psi (\alpha_i,j) x_i[k-j],
\end{equation}
where $\alpha_i$ is the fractional order corresponding to state $i$ and
\[ \psi(\alpha_i,j) = \frac{\Gamma(j-\alpha_i)}{\Gamma(-\alpha_i) \Gamma(j+1)}, \]
with $\Gamma(\cdot)$ being the gamma function defined by $\Gamma(z) = \int_0^{\infty} s^{z-1} e^{-s} \: \mathrm{d}s$ for all complex numbers $z$ with $\Re (z) > 0$ \cite{DzielinskiFOS}.

\subsubsection{Artifact Model}

We assume in this paper that the measurement channels in which the artifacts are present do not change over time, but can be arbitrary. Note that this is a realistic assumption for our problem since neurophysiological studies of EEG artifacts indicate that they generally occur in specific channels without switching channels arbitrarily and have highly conserved morphological structure \cite{Debener2010}. Therefore, we will talk about the \emph{number} of channels which are affected by artifacts, which will allow us to make statements about unambiguously recovering the initial state $x[0]$ even in the presence of a given number of artifacts.

\subsubsection{Central Estimator Model}
We will also assume the presence of a central estimator, whose job is to receive the output $y_i[k]$, $i \in \{1,\ldots,p\}$ of each sensor at every time step $k$ and, from this information, estimate the initial state $x[0]$. We also assume that the estimator has knowledge of the system matrix $A$, the fractional-order coefficients $\{\alpha_i\}_{i=1}^n$, and the sensor measurement matrix $C$, which can be retrieved using the methods outlined in \cite{gupta2018}. From~\eqref{eq:frac_model}, we note that the problems of estimating $x[k]$ and $x[0]$ are exactly equivalent. Therefore, our focus will be on estimating $x[0]$.
\par Based on the above ingredients, the problem we consider in this paper is as follows.
\begin{probstat}
Given the system matrix $A \in \mathbb{R}^{n \times n}$, the sensor measurement matrix $C \in \mathbb{R}^{p \times n}$, the fractional-order coefficients $\{\alpha_i\}_{i=1}^n$ corresponding to each state and the outputs $y_i[k] \in \mathbb{R}^p$ of sensor $i$, $i \in \{1,\ldots,p\}$, then for every time step $k \in \mathbb{N}$ in~\eqref{eq:frac_model}, we aim to estimate the initial state $x[0] \in \mathbb{R}^n$.
\end{probstat}
In what follows, we first provide necessary and sufficient conditions that guarantee the feasibility of the problem, followed by an efficient algorithm to estimate the states.

\section{Main Results}\label{sec:mainres}

Prior to going into our main results, we review some essential theory for fractional-order systems, including \mbox{closed-form} expressions for the state dynamics. Using the expansion of the fractional-order derivative in~\eqref{eq:frac_deriv}, the evolution of the state vector can be written as follows
\begin{equation}\label{eq:state_evol}
    x[k+1] = Ax[k] - \sum_{j=1}^{k+1} D(\alpha,j) x[k+1-j],
\end{equation}
where $D(\alpha,j) = \text{diag}(\psi(\alpha_1,j),\psi(\alpha_2,j),\ldots,\psi(\alpha_n,j))$. Alternatively,~\eqref{eq:state_evol} can be written as
\begin{equation}
    x[k+1] = \sum_{j=0}^k A_j x[k-j],
\end{equation}
where $A_0 = A - D(\alpha,1)$ and $A_j = -D(\alpha,j+1)$ for $j \geq 1$. Defining matrices $G_k$ as
\begin{equation}
    G_k =
    \begin{cases*}
      I_n & $k = 0$, \\
      \displaystyle \sum_{j=0}^{k-1} A_j G_{k-1-j} & $k \geq 1$,
    \end{cases*} 
\end{equation}
we can state the following result.
\begin{lemma}[\cite{guermah2008}]\label{lemma:state_prop}
The solution to the system described by~\eqref{eq:frac_model} is given by
\begin{equation}
    x[k] = G_k x[0].
\end{equation}
\end{lemma}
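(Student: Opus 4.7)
The plan is to prove the lemma by strong induction on the time index $k$, relying on the alternative state-evolution formulation $x[k+1] = \sum_{j=0}^{k} A_j x[k-j]$ together with the recursive definition of $G_k$.

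For the base case $k=0$, the identity $x[0] = G_0 x[0]$ is immediate since $G_0 = I_n$. For the inductive step, I will assume $x[\ell] = G_\ell x[0]$ for every $\ell \in \{0,1,\ldots,k\}$ and show that the same relation holds at time $k+1$. Starting from
\begin{equation*}
x[k+1] = \sum_{j=0}^{k} A_j x[k-j],
\end{equation*}
I substitute the inductive hypothesis $x[k-j] = G_{k-j} x[0]$ inside the sum, pull the fixed vector $x[0]$ out on the right, and obtain
\begin{equation*}
x[k+1] = \left( \sum_{j=0}^{k} A_j G_{k-j} \right) x[0].
\end{equation*}
The final step is then to recognize that the bracketed expression is exactly $G_{k+1}$, since applying the recursive definition at index $k+1$ gives $G_{k+1} = \sum_{j=0}^{k} A_j G_{(k+1)-1-j} = \sum_{j=0}^{k} A_j G_{k-j}$.

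I do not expect any serious obstacle here; the only thing that requires care is bookkeeping of the summation limits so that the index shift $k \mapsto k+1$ in the definition of $G_k$ aligns cleanly with the summation appearing in the state-evolution identity. The representation $x[k+1] = \sum_{j=0}^k A_j x[k-j]$ was already derived in the excerpt from~\eqref{eq:state_evol} by collecting the $Ax[k]$ term together with $-D(\alpha,1)x[k]$ into $A_0$ and identifying $-D(\alpha, j+1)$ with $A_j$ for $j \ge 1$, so it may be used without further justification. The lemma is attributed to~\cite{guermah2008}, and this short inductive argument is essentially the only content required.
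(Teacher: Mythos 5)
Your induction is correct: the base case $G_0 = I_n$ is immediate, and the inductive step matches the recursion $G_{k+1} = \sum_{j=0}^{k} A_j G_{k-j}$ exactly, with the index bookkeeping handled properly. The paper itself offers no proof (it cites \cite{guermah2008}), and your argument is the standard one that establishes the result.
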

Let $x[0] \in \mathbb{R}^n$ be the initial state of the plant and let $y[0],\ldots,y[k-1] \in \mathbb{R}^p$ be the sensor measurement outputs that are available to the central estimator in $k$ time steps. Using~\eqref{eq:frac_model} and Lemma~\ref{lemma:state_prop}, we can write
\begin{equation}
    y[k] = CG_k x[0] + e[k],
\end{equation}
where $e[k]$ represents the artifacts across different measurement channels. If $K \subset \{ 1,\ldots,p \}$ denotes the set of sensors which are disturbed by artifacts, we can use our assumption that the channels in which the artifacts are present do not change over time to conclude that $\mathsf{supp}(e[k]) \subset K$. After receiving the $k$ sensor outputs $y[0],\ldots,y[k-1]$, the central estimator $\mathcal{C}:(\mathbb{R}^p)^k \to \mathbb{R}^n$ estimates the initial state $x[0]$ of the plant. The estimation is \textit{correct} if $\mathcal{C}(y[0],\ldots,y[k-1]) = x[0]$. More formally, we introduce the following definition.
\begin{definition}
We say that the central estimator $\mathcal{C}:(\mathbb{R}^p)^k \to \mathbb{R}^n$ can correctly estimate the initial state $x[0]$ in the presence of artifacts in $q$ channels in $k$ time steps if, for any $x[0] \in \mathbb{R}^n$ and \emph{any} sequence of vectors $e[0],\ldots,e[k-1] \in \mathbb{R}^p$ such that $\mathsf{supp}(e[k]) \subset K$ with $|K| = q$, we have $\mathcal{C}(y[0],\ldots,y[k-1]) = x[0]$ with $y[k] = CG_k x[0] + e[k]$.
\end{definition}
Subsequently, the first question one can ask is as follows: What are the required conditions that ensure that the estimation is correct? We provide an answer to this in the next couple of results. First, we consider necessary conditions.
\begin{theorem}
Let $k \in \mathbb{N} \setminus \{0\}$ and $E_{q,k}$ be the set of error vectors $(e[0],\ldots,e[k-1]) \in (\mathbb{R}^p)^k$ that satisfy for all $k' \in \{0,\ldots,k-1\}$, $\mathsf{supp}(e[k']) \subset K$ for some $K \in \{1,\ldots,p\}$ with $|K| = q$. Then, the following statements are equivalent:
\begin{enumerate}
    \item[(1)] There \textit{does not exist} a central estimator that can recover the initial state $x[0]$ in $k$ time steps with artifacts across $q$ channels.
    \item[(2)] There exists $x_a, x_b \in \mathbb{R}^n$, $x_a \neq x_b$ and $(e_a[0],\ldots,e_a[k-1]), (e_b[0],\ldots,e_b[k-1]) \in E_{q,k}$ such that $G_k x_a + e_a[k'] = G_k x_b + e_b[k']$ for all $k' \in \{0,\ldots,k-1\}$.
\end{enumerate}
\end{theorem}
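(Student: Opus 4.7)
The plan is to prove the biconditional by establishing (2) $\Rightarrow$ (1) directly and (1) $\Rightarrow$ (2) via its contrapositive, reading the coincidence condition in (2) as $CG_{k'} x_a + e_a[k'] = CG_{k'} x_b + e_b[k']$ in accordance with~\eqref{eq:frac_model} and the definition preceding the theorem.

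For (2) $\Rightarrow$ (1), I would argue by contradiction. Assume that a central estimator $\mathcal{C}$ exists that correctly recovers $x[0]$ in $k$ time steps under artifacts in $q$ channels. Applying the definition to $(x_a, e_a)$ yields $\mathcal{C}(y_a[0], \ldots, y_a[k-1]) = x_a$ where $y_a[k'] = CG_{k'} x_a + e_a[k']$, and similarly $\mathcal{C}(y_b[0], \ldots, y_b[k-1]) = x_b$. Condition (2) forces the two measurement sequences to coincide pointwise in $k'$, so $\mathcal{C}$ must simultaneously output $x_a$ and $x_b$, contradicting $x_a \neq x_b$.

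For (1) $\Rightarrow$ (2), I would prove the contrapositive by explicitly constructing a valid estimator under the negation of (2). So assume that for every $x_a \neq x_b$ and every pair $(e_a, e_b) \in E_{q,k} \times E_{q,k}$, the induced measurement sequences differ in at least one time index. Define $\mathcal{C}$ on admissible inputs---those $(y[0], \ldots, y[k-1])$ that admit a decomposition $y[k'] = CG_{k'} x[0] + e[k']$ for some $x[0] \in \mathbb{R}^n$ and $e \in E_{q,k}$---by setting $\mathcal{C}(y[0], \ldots, y[k-1]) = x[0]$; on any other input, assign $\mathcal{C}$ an arbitrary default value (e.g., $0_n$). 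The negation of (2) ensures that this $x[0]$ is uniquely determined by the measurements, so $\mathcal{C}$ is well-defined and, by construction, recovers every initial state under any artifact pattern in $E_{q,k}$, violating (1).

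The main obstacle is the well-definedness of $\mathcal{C}$ in the contrapositive step: one must verify that uniqueness of $x[0]$ in the decomposition really follows from the negation of (2). This reduces to observing that two distinct decompositions $(x_a, e_a)$ and $(x_b, e_b) \in \mathbb{R}^n \times E_{q,k}$ of the same measurement sequence would produce exactly the coincidence (2) forbids. A minor point worth checking is that the support sets $K_a, K_b$ of the two error sequences need not be equal; since $E_{q,k}$ is a union over all $K \subset \{1,\ldots,p\}$ with $|K| = q$, the negation of (2) is quantified strongly enough to cover this case without modification.
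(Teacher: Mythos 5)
Your proposal is correct and takes the same route the paper intends: the paper's own proof is only a one-sentence appeal to ``distinguishability arguments,'' and your two directions (indistinguishable measurement sequences force any estimator to output two distinct states, and, conversely, uniqueness of the decomposition lets one define a correct estimator) are exactly the standard argument being invoked, written out in full. Your attention to the well-definedness of the constructed estimator and to the fact that the two error supports $K_a, K_b$ need not coincide goes beyond what the paper records, but it does not change the approach.
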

\begin{proof}
The proof of this Theorem readily follows from the distinguishability arguments in the context of observable states, where, given two distinct values $x_a, x_b$ with $x_a \neq x_b$ explaining the data collected with less than $q$ affected channels, it is impossible to unambiguously recover $x[0]$.
\end{proof}
\par We then have a simple necessary and sufficient condition in terms of the matrices $C$ and $G_k$ to characterize the effect of correctly estimating the system states with artifacts being present in $q$ channels.
\begin{theorem}
Let $k \in \mathbb{N} \setminus \{0\}$. Then, the following statements are equivalent:
\begin{enumerate}
    \item[(1)] The central estimator $\mathcal{C}$ can recover $x[0]$ in $k$ time steps in the presence of artifacts across $q$ channels.
    \item[(2)] For all $z \in \mathbb{R}^n \setminus \{0\}$, $| \mathsf{supp}(CG_0 z) \cup \mathsf{supp}(CG_1 z) \cup \ldots \cup \mathsf{supp}(CG_{k-1}z) | > 2q$.
\end{enumerate}
\end{theorem}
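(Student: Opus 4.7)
The plan is to use the previous theorem to reduce correct estimability to a distinguishability condition on pairs of initial states with admissible error sequences, and then translate that condition into the stated support condition. In short, I would prove the contrapositive equivalence $\neg(1) \Leftrightarrow \neg(2)$, using the fact that $y[k'] = CG_{k'}x[0] + e[k']$ by Lemma~\ref{lemma:state_prop}.

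For the direction $\neg(1) \Rightarrow \neg(2)$, assume no central estimator can recover $x[0]$. By the previous theorem, there exist $x_a \neq x_b$ in $\mathbb{R}^n$ and error sequences $(e_a[0],\ldots,e_a[k-1]), (e_b[0],\ldots,e_b[k-1]) \in E_{q,k}$ with $CG_{k'}x_a + e_a[k'] = CG_{k'}x_b + e_b[k']$ for every $k' \in \{0,\ldots,k-1\}$. Setting $z = x_a - x_b \neq 0$ and $\tilde{e}[k'] = e_b[k'] - e_a[k']$ gives $CG_{k'}z = \tilde{e}[k']$. Since $\mathsf{supp}(e_a[k']) \subseteq K_a$ and $\mathsf{supp}(e_b[k']) \subseteq K_b$ with $|K_a|,|K_b| \leq q$, one has $\mathsf{supp}(CG_{k'}z) \subseteq K_a \cup K_b$ for every $k'$, so
\[
\Bigl| \bigcup_{k'=0}^{k-1} \mathsf{supp}(CG_{k'}z) \Bigr| \leq |K_a \cup K_b| \leq 2q,
\]
contradicting (2). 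For the converse $\neg(2) \Rightarrow \neg(1)$, suppose there is a nonzero $z \in \mathbb{R}^n$ such that $S := \bigcup_{k'=0}^{k-1}\mathsf{supp}(CG_{k'}z)$ has $|S| \leq 2q$. Partition $S$ into disjoint subsets $K_a,K_b$ with $|K_a|,|K_b| \leq q$ (for instance, the first $\lceil |S|/2 \rceil$ and the remaining indices, each of which has size at most $\lceil 2q/2 \rceil = q$). Define $e_a[k']$ and $e_b[k']$ supported on $K_a$ and $K_b$, respectively, by assigning, for each $i \in S$, $(e_b[k'])_i = (CG_{k'}z)_i$ and $(e_a[k'])_i = 0$ when $i \in K_b$, and $(e_a[k'])_i = -(CG_{k'}z)_i$ and $(e_b[k'])_i = 0$ when $i \in K_a$, so that $e_b[k'] - e_a[k'] = CG_{k'}z$. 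Taking $x_a = z$ and $x_b = 0$ gives two distinct initial states producing identical outputs under admissible artifact sequences, so by the previous theorem (1) fails.

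I expect the main obstacle to be notational bookkeeping in the converse direction, namely the explicit partition of $S$ and the coordinatewise definition of $e_a[k'],e_b[k']$ with supports contained in the prescribed sets of size at most $q$; once this is laid out, the argument is a direct specialization of the standard sparse-error distinguishability reduction in secure state estimation, with the classical observability block replaced by the stack of $CG_{k'}$ matrices dictated by the fractional-order dynamics.
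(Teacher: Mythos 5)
Your proposal is correct and follows essentially the same route as the paper: both directions reduce to the distinguishability characterization of the previous theorem, splitting the union of supports of $CG_{k'}z$ into two sets of size at most $q$ to build indistinguishable error sequences in one direction, and taking $z = x_a - x_b$ with $L_a \cup L_b$ of size at most $2q$ in the other. Your explicit coordinatewise construction of $e_a[k'], e_b[k']$ merely spells out a decomposition the paper asserts more tersely.
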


\subsection*{Comparisons with LTI systems}

Similar results have been obtained by Fawzi et al. \cite{fawzi2011} for LTI systems. However, there are some crucial differences. Fawzi et al. consider the problem of dealing with errors injected by a malicious agent in the case of a discrete-time linear time-invariant system without inputs. Their version of the above theorem is a generalized criterion for the observability of a linear dynamical system with attacks, which implies that the initial state $x[0]$ can only be recovered in $k$ time steps if the observability matrix given by $\mathcal{O}_{\mathrm{LTI}} = \begin{bmatrix} C^{\mathsf{T}} & A^{\mathsf{T}} C^{\mathsf{T}} & \ldots & (A^{\mathsf{T}})^{k-1} C^{\mathsf{T}} \end{bmatrix}^{\mathsf{T}}$ has full rank. Furthermore, the maximum number of correctable errors cannot increase beyond $k = n$ measurements, which follows immediately from the Cayley-Hamilton theorem, since, for any $z$, $k \geq n$, we have $\mathsf{supp}(CA^k z) \subset \mathsf{supp}(Cz) \cup \ldots \cup \mathsf{supp}(CA^{k-1}z)$.
\par In general, for a linear discrete-time fractional-order system modeled by~\eqref{eq:frac_model} and the matrices $G_k$, the necessary and sufficient condition for observability is the existence of a finite time $k'$ (which may be greater than $n$) such that $\text{rank}(\Xi_{k'}) = n$, where the observability matrix $\Xi_{k'} = \begin{bmatrix} CG_0 & CG_1 & \ldots & CG_{k'-1} \end{bmatrix}^{\mathsf{T}}$ \cite{guermah2008}. Therefore, a remarkable fact is that for a discrete-time fractional-order system, the maximum number of correctable artifacts \textit{can increase} beyond $n$ measurements. This is due to the fact that the terms in the observability matrix for fractional-order systems do not constitute a power series; hence we cannot apply the Cayley-Hamilton theorem as in the linear time-invariant case. Specifically, the terms $G_{k'}$ in the observability matrix $\Xi_{k'}$ are composed of the terms $A_j$, $j = 0,1,\ldots,k-1$, which increase with the time step. Put simply, fractional-order dynamics aggregate the effects of \textit{all time} for each new iteration of the state, and hence, this long-term memory becomes relevant in determining the minimum number of time steps in which the system becomes observable. Also, the property of being able to increase the number of correctable artifacts beyond $n$ measurements is ideal from the perspective of our problem, since it assures us that taking more measurements will not go in vain.

Next, we present a result on the number of correctable artifacts as a function of the number of measurements $\tau$.
\begin{theorem}
Let $\tau \in \mathbb{N} \setminus \{0\}$ be such that $p\tau \geq k'$, $p \in \mathbb{N} \setminus \{0\}$ where $k' \geq n$ is the minimum index such that $\text{rank}(\Xi_{k'})=n$. If the initial state $x[0]$ can be estimated in $\tau$ time steps in the presence of artifacts in $q$ channels, then
\[ q < \frac{p-\lfloor (k'-1)/\tau \rfloor}{2} \leq \frac{p-k'/\tau+1}{2}.\]
\end{theorem}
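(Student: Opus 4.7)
The natural starting point is Theorem~2, which says that correct recovery in $\tau$ time steps with $q$ corrupted channels requires $\bigl|\bigcup_{t=0}^{\tau-1}\mathsf{supp}(CG_{t}z)\bigr| > 2q$ for every $z \in \mathbb{R}^{n}\setminus\{0\}$. My plan is to turn this into an upper bound on $q$ by exhibiting one specific $z \neq 0$ whose aggregated support is at most $p - \eta$, where $\eta := \lfloor (k'-1)/\tau \rfloor$. Substituting into Theorem~2 then forces $2q < p - \eta$, i.e., the first inequality.

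The construction of this $z$ is the crux, and the place I expect to have to be most careful. I would fix an arbitrary subset $S \subseteq \{1,\ldots,p\}$ of cardinality $\eta$, and form the $\eta\tau \times n$ submatrix $N_{S}$ of $\Xi_{\tau}$ obtained by keeping only the rows indexed by pairs $(i,t)$ with $i \in S$ and $t \in \{0,\ldots,\tau-1\}$. The key structural observation is that $N_{S}$ is literally a row-submatrix of $\Xi_{k'-1}$. Two facts go into this: first, the definition of $\eta$ gives $\eta\tau \leq k'-1$, controlling the number of rows; second, whenever $\eta \geq 1$ we have $\tau \leq k'-1$, so every time index $t \leq \tau-1$ also satisfies $t \leq k'-2$, placing each row of $N_{S}$ inside $\Xi_{k'-1}$. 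By the \emph{minimality} of $k'$, $\mathrm{rank}(\Xi_{k'-1}) < n$, and therefore $\mathrm{rank}(N_{S}) < n$; its kernel contains some $z \neq 0$. For this $z$, every entry $(CG_{t}z)_{i}$ with $i \in S$ vanishes, so $\bigcup_{t}\mathsf{supp}(CG_{t}z) \subseteq \{1,\ldots,p\}\setminus S$, a set of cardinality $p - \eta$. Combining with Theorem~2 yields $2q < p - \eta$. The edge case $\eta = 0$ (i.e.\ $\tau \geq k'$) is handled separately using only the trivial observation that the union is contained in $\{1,\ldots,p\}$.

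The second inequality is a short arithmetic check on the floor: writing $k'-1 = \eta\tau + r$ with $0 \leq r < \tau$ gives $k'/\tau = \eta + (r+1)/\tau \leq \eta + 1$, so $\eta \geq k'/\tau - 1$, and hence $p - \eta \leq p - k'/\tau + 1$; dividing by $2$ closes the argument. The assumption $p\tau \geq k'$ ensures the right-hand side $(p - k'/\tau + 1)/2$ stays nonnegative, so that the stated bound is nontrivial.
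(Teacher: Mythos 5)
Your proof is correct, and it follows the same overall strategy as the paper's: fix a channel set of size $\eta=\lfloor (k'-1)/\tau\rfloor$, produce a nonzero $z$ whose outputs vanish on those channels for all $\tau$ time steps, feed the resulting support bound $p-\eta$ into Theorem~2 to get $2q<p-\eta$, and finish with the same floor arithmetic for the second inequality. Where you genuinely diverge is in the one step that actually needs an argument: why such a $z$ exists. The paper forms the operator $\Phi: z\mapsto (\mathcal{P}_L CG_0 z,\ldots,\mathcal{P}_L CG_{\tau-1}z)$ and asserts that because its codomain has dimension $|L|\tau<k'$ the nullspace is nontrivial --- which, read literally, is a non sequitur: a codomain of dimension less than $k'$ can still have dimension at least $n$ when $k'>n$, in which case $\Phi$ could a priori be injective. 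Your route supplies exactly the missing ingredient: when $\eta\geq 1$ (hence $\tau\leq k'-1$) every row of $N_S$ is a row of $\Xi_{k'-1}$, and the minimality of $k'$ forces $\mathrm{rank}(\Xi_{k'-1})<n$, hence $\mathrm{rank}(N_S)<n$ and the kernel is nontrivial. This is the justification the paper's dimension count needs but does not state, and you also handle the $\eta=0$ edge case explicitly, which the paper glosses over. The only superfluous item in your write-up is the remark that $\eta\tau\leq k'-1$ ``controls the number of rows'': the row count plays no role in your rank argument; only the inclusion of the time indices $t\leq\tau-1\leq k'-2$ matters.
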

\begin{remark}
The problem of actually computing the number of channels affected by artifacts is nontrivial and involves checking that the nullspace of
\[ \begin{bmatrix}
\mathcal{P}_{K^c} CG_0 \\ \mathcal{P}_{K^c} CG_1 \\ \vdots \\ \mathcal{P}_{K^c} CG_{\tau-1}
\end{bmatrix}\]
is not trivial, which, in the worst case, requires computing the rank of $2^p$ matrices, when $K \subset \{1,\ldots,p\}$.
\end{remark}

\section{Constructing the Central Estimator}\label{sec:estim}

In this section, we will focus on actually constructing the central estimator that can estimate the initial state $x[0]$. Consider the central estimator $\mathcal{C}_0^k: (\mathbb{R}^p)^k \to \mathbb{R}^n$ defined in a way that $C_0^k(y[0],\ldots,y[k-1])$ is the optimal $x$ solution for the optimization problem
\begin{equation}\label{eq:opt_prob}
\begin{aligned}
& \underset{x \in \mathbb{R}^n, K \subset \{1,\ldots,p\}}{\text{minimize}}
& & |K| \\
& \text{subject to}
& & \mathsf{supp}(y[k'] - CG_{k'}x) \subset K, \\
&&& \text{for} \: k' \in \{0,\ldots,k-1\}.
\end{aligned}
\end{equation}
We note that the optimization problem as stated above can have multiple solutions, since the central estimator looks for the smallest set $K$ of channels disturbed by artifacts. In such a case, we consider $\mathcal{C}_0^k(y[0],\ldots,y[k-1])$ to be \emph{any} such solution.
\par In the next result, we show that the the central estimator $\mathcal{C}_0^k$ is, in some sense, \emph{optimal}.
\begin{theorem}\label{th:opt_est}
Assume that $x[0]$ can be estimated in $k$ time steps in the presence of $q$ artifacts, or, $| \mathsf{supp}(CG_0 z) \cup \ldots \cup \mathsf{supp}(CG_{k-1}z) |>2q$ for all $z \in \mathbb{R}^n \setminus \{0\}$. Then, $C_0^k$ correctly estimates $x[0]$, that is, for any $x[0] \in \mathbb{R}^n$ and $e[0],\ldots,e[k-1] \in \mathbb{R}^p$ such that $\mathsf{supp}(e[k]) \subset K$ with $|K| \leq q$, we have $\mathcal{C}_0^k(y[0],\ldots,y[k-1]) = x[0]$, where $y[k] = CG_k x[0] + e[k]$.
\end{theorem}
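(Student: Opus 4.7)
The plan is to argue by a feasibility/optimality comparison between the true pair $(x[0], K)$ and an optimal solution $(x^\star, K^\star)$ of the optimization problem in~\eqref{eq:opt_prob}, and then invoke the hypothesis $|\mathsf{supp}(CG_0 z) \cup \cdots \cup \mathsf{supp}(CG_{k-1} z)| > 2q$ on the difference $z = x[0] - x^\star$ to force $z=0$.

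First, I would observe that the true initial state $x[0]$ together with the true artifact support set $K$ (with $|K| \le q$) is always feasible for~\eqref{eq:opt_prob}, because by construction $y[k'] - CG_{k'} x[0] = e[k']$ and $\mathsf{supp}(e[k']) \subset K$ for every $k' \in \{0,\ldots,k-1\}$. Therefore any optimal solution $(x^\star, K^\star)$ returned by $\mathcal{C}_0^k$ must satisfy $|K^\star| \le |K| \le q$.

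Next, using the feasibility of $(x^\star,K^\star)$, I would define $\tilde e[k'] := y[k'] - CG_{k'} x^\star$, so $\mathsf{supp}(\tilde e[k']) \subset K^\star$ and $|\mathsf{supp}(\tilde e[k'])| \le q$. Subtracting from $y[k'] = CG_{k'}x[0] + e[k']$ gives
\[
CG_{k'}\,(x[0]-x^\star) \;=\; e[k'] - \tilde e[k'],
\]
whose support is contained in $K \cup K^\star$. Taking $z := x[0] - x^\star$ and unioning over $k' \in \{0,\ldots,k-1\}$, I would conclude
\[
\mathsf{supp}(CG_0 z) \cup \cdots \cup \mathsf{supp}(CG_{k-1} z) \;\subset\; K \cup K^\star,
\]
so the cardinality of the left-hand side is at most $|K| + |K^\star| \le 2q$. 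By the standing hypothesis (equivalently, Theorem~2 applied in the contrapositive), this bound is impossible unless $z = 0$, i.e., $x^\star = x[0]$, which is exactly the correctness claim.

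I do not foresee a genuine obstacle; the argument is essentially a decoding-style uniqueness lemma, and the only subtlety is keeping the bookkeeping straight about which support lies in $K$ versus $K^\star$ and making sure the factor of $2$ appears correctly when unioning the two. A minor point worth flagging is that the optimization may admit multiple minimizers, but since the definition of $\mathcal{C}_0^k$ allows \emph{any} such minimizer, the argument above applies uniformly to every optimal $x^\star$, so the estimator is well-defined and correct regardless of the selection rule.
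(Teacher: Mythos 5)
Your proposal is correct and follows essentially the same route as the paper's proof: the true pair $(x[0],K)$ is feasible for~\eqref{eq:opt_prob}, so any minimizer $(x^\star,K^\star)$ has $|K^\star|\le q$, and two explanations of the data each using at most $q$ corrupted channels contradict the standing hypothesis. If anything, your write-up is more complete than the paper's, which stops at ``two initial conditions explain the readings'' and appeals to the estimability assumption, whereas you explicitly carry out the decisive step of bounding $|\mathsf{supp}(CG_0 z)\cup\cdots\cup\mathsf{supp}(CG_{k-1}z)|$ by $|K|+|K^\star|\le 2q$ to force $z=0$.
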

\par The implication of Theorem~\ref{th:opt_est} is that if \textit{any} estimator can estimate $x[0]$ in $k$ time steps in the presence of $q$ artifacts, then $\mathcal{C}_0^k$ also can. However, the optimization problem~\eqref{eq:opt_prob} is NP-hard in general \cite{guru2008}.

\section{The $\ell_1$ Central Estimator}\label{sec:better_estim}

Given the time step $k \in \mathbb{N} \setminus \{ 0 \}$, consider the linear mapping $\Phi_k$ defined by
\begin{align*}
    \Phi_k : &\mathbb{R}^n \to \mathbb{R}^{p \times k} \\
    &x \mapsto \big[\begin{array}{c|c|c|c}
CG_0 x & CG_1 x & \ldots & CG_{k-1} x
\end{array}\big].
\end{align*}
Further, define $Y_k$ as the $p \times k$ matrix obtained by stacking $y[0],\ldots,y[k-1]$ as columns, that is,
\[
    Y_k = \big[\begin{array}{c|c|c|c}
y[0] & y[1] & \ldots & y[k-1]
\end{array}\big] \in \mathbb{R}^{p \times k}.\]
We have already seen that the central estimator $\mathcal{C}_0^k$ computes $x[0]$ from the measurements $y[0],\ldots,y[k-1]$ as the solution of the ``$\ell_0$-norm'' optimization problem
\begin{equation}
    \mathcal{C}_0^k(y[0],\ldots,y[k-1]) = \argmin_{x \in \mathbb{R}^n} \| Y_k - \Phi_k x \|_{\ell_0}.
\end{equation}
\begin{remark}\label{remark:l0-l1}
In \cite{CandesTao}, Candes and Tao show that under certain mild conditions, the ``$\ell_0$-norm'' can be replaced by the $\ell_1$-norm (that is, an $\ell_1$-norm relaxation), which leads to a linear program, for which the solutions are more computationally tractable.
\end{remark}
From the argument in Remark~\ref{remark:l0-l1}, we have that a central estimator where the ``$\ell_0$-norm'' is replaced by the $\ell_1$-norm can perform the same job without any change in the optimal value of the optimization problem. More generally, following a similar approach to that proposed in \cite{fawzi2011}, given $r \geq 1$, if we measure the magnitude of a row of a matrix by its $\ell_r$-norm in $\mathbb{R}^k$, then, the so-called ``$\ell_1 / \ell_r$ central estimator'' can be represented as follows
\begin{equation}\label{eq:convex_opt}
    C_{1,r}^k(y[0],\ldots,y[k-1]) = \argmin_{x \in \mathbb{R}^n} \| Y_k - \Phi_k x \|_{\ell_1 / \ell _r},
\end{equation}
where,
\[ \|M\|_{\ell_1 / \ell_r}  = \sum_{i=1}^p \|M_i\|_{\ell_r}.\]
Note that~\eqref{eq:convex_opt} is a convex optimization problem and can be tractably solved.
\subsection{State estimation capability of the $\ell_1 / \ell_r$ central estimator}
Next, we present a result that quantifies the state estimation capability of the $\ell_1 / \ell_r$ central estimator.
\begin{theorem}\label{correction_prop}
The following statements are equivalent:
\begin{enumerate}
    \item[(1)] The $\ell_1 / \ell_r$ central estimator $\mathcal{C}_{1,r}^k$ can correctly estimate $x[0]$ in $k$ time steps in the presence of $q$ artifacts.
    \item[(2)] For all $K \subset \{1,\ldots,p\}$, with $|K| = q$ and for all $\Sha = \Phi_k z$ with $z \in \mathbb{R}^n \setminus \{0\}$, we have
    \[ \sum_{i \in K} \| \Sha_i \|_{\ell_r} < \sum_{i \in K^c} \| \Sha_i \|_{\ell r}. \]
\end{enumerate}
\end{theorem}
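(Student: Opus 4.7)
The plan is to adapt the classical null-space characterization for sparse recovery (à la Candès--Tao, and the LTI analogue in Fawzi et al.~\cite{fawzi2011}) to the present fractional-order setting, with the linear map $\Phi_k$ acting as the measurement operator and the $\ell_1/\ell_r$ mixed norm playing the role of the convex surrogate. Both directions of the equivalence follow from row-wise triangle inequality manipulations of the cost $\|Y_k - \Phi_k x\|_{\ell_1/\ell_r}$, so I will prove each implication separately.

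For $(2)\Rightarrow(1)$, I would fix an arbitrary true state $x[0]\in\mathbb{R}^n$ and an artifact matrix $E\in\mathbb{R}^{p\times k}$ with $\mathsf{rowsupp}(E)\subset K$ for some $K$ with $|K|=q$. Evaluating the cost at $x[0]$ gives exactly $\sum_{i\in K}\|E_i\|_{\ell_r}$. For any candidate $x'\neq x[0]$, set $z=x[0]-x'\neq 0$ and write $Y_k-\Phi_k x' = E + \Phi_k z$. Splitting the outer sum over $K$ and $K^c$, and applying the reverse triangle inequality on the $K$-block (using $E_i=0$ for $i\in K^c$), yields
\begin{equation*}
\|Y_k-\Phi_k x'\|_{\ell_1/\ell_r} \;\geq\; \sum_{i\in K}\|E_i\|_{\ell_r} \;-\; \sum_{i\in K}\|(\Phi_k z)_i\|_{\ell_r} \;+\; \sum_{i\in K^c}\|(\Phi_k z)_i\|_{\ell_r}.
\end{equation*}
Hypothesis (2), applied to this $z$ and this $K$, makes the last two terms combine to a strictly positive quantity, so the cost at $x'$ strictly exceeds the cost at $x[0]$. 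Hence $x[0]$ is the unique minimizer of~\eqref{eq:convex_opt}, which is precisely (1).

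For $(1)\Rightarrow(2)$, I would argue by contrapositive. Assuming (2) fails, pick $K$ with $|K|=q$ and $z\neq 0$ such that $\sum_{i\in K}\|(\Phi_k z)_i\|_{\ell_r} \geq \sum_{i\in K^c}\|(\Phi_k z)_i\|_{\ell_r}$. Construct an adversarial instance by declaring the true initial state to be $x[0]=z$ and defining $E$ by $E_i = -(\Phi_k z)_i$ for $i\in K$ and $E_i = 0$ otherwise; this is admissible since $\mathsf{rowsupp}(E)\subset K$ with $|K|=q$. Direct computation shows the $\ell_1/\ell_r$ cost at the candidate $x'=0$ equals $\sum_{i\in K^c}\|(\Phi_k z)_i\|_{\ell_r}$, while the cost at the true $x[0]=z$ equals $\sum_{i\in K}\|(\Phi_k z)_i\|_{\ell_r}$. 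The assumed inequality then says that $x'=0$ is a legitimate minimizer, but $0\neq z=x[0]$, so correct estimation fails, contradicting (1).

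The main subtlety will be the boundary case in the contrapositive, where (2) fails with equality rather than strict inequality: there the estimator could, under a favorable tie-breaking rule, still happen to return $x[0]=z$. I would handle this by invoking the definition of correct estimation, which requires that \emph{every} element of $\argmin$ coincide with $x[0]$, so that the mere existence of a distinct co-minimizer already refutes (1). A secondary bookkeeping point is that $\Phi_k z$ lives in $\mathbb{R}^{p\times k}$ with each row $(\Phi_k z)_i = [\,CG_0 z,\ldots,CG_{k-1}z\,]_i \in\mathbb{R}^k$, so the row-wise $\ell_r$ norms must be treated as norms on $\mathbb{R}^k$ throughout, with the triangle and reverse triangle inequalities applied rowwise before summing; this has to be stated cleanly to justify the inequality chain above.
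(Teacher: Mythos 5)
Your proposal is correct and follows essentially the same route as the paper's proof: the $(1)\Rightarrow(2)$ direction via an adversarial $K$-supported error matrix built from the rows of $\Phi_k z$ (you take $x[0]=z$ versus candidate $0$, the paper takes the mirror image $x[0]=0$ versus candidate $z$), and the converse via the same rowwise reverse-triangle-inequality decomposition over $K$ and $K^c$, which you present directly while the paper argues by contrapositive. Your explicit handling of the tie-breaking case and of the row norms on $\mathbb{R}^k$ is a welcome clarification but not a different method.
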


\begin{remark}
Those who are familiar with the techniques of compressed sensing will immediately notice that the condition $\sum_{i \in K} \| \Sha_i \|_{\ell_r} < \sum_{i \in K^c} \| \Sha_i \|_{\ell_r}$ is simply a restatement of the ``nullspace property'' that gives necessary and sufficient conditions for the recovery of sparse signals using $\ell_1$-norm relaxation \cite{davenport2011}.
\end{remark}

\subsection{An equivalent sufficient condition}
As evident from the previous section, the $\ell_1 / \ell_r$ central estimator $\mathcal{C}_{1,r}^k$ can correctly estimate $x[0]$ in $k$ time steps in the presence of $q$ artifacts, if and only if $\sum_{i \in K} \| \Sha_i \|_{\ell_r} < \sum_{i \in K^c} \| \Sha_i \|_{\ell_r}$ for all $K \subset \{1,\ldots,p\}$ with $|K|=q$ and for all $\Sha = \Phi_k z$ with $z \in \mathbb{R}^n \setminus \{ 0 \}$. However, it is not easy to check the above inequality for every $z \in \mathbb{R}^n \setminus \{0\}$.
\par In this section, we will propose sufficient conditions that are easier to check and will let us conclude whether the central estimator $\mathcal{C}_{1,r}^k$ can estimate the system states in $k$ time steps even in the presence of $q$ artifacts. From Theorem~\ref{correction_prop}, for a given $K \subset \{1,\ldots,p\}$ and a given $z \in \mathbb{R}^n \setminus \{0\}$, we have
\begin{align*}
    &\sum_{i \in K} \| (\Phi_k z)_i \|_{\ell_r} < \sum_{i \in K^c} \| (\Phi_k z)_i \|_{\ell_r} \\
    \iff &\frac{\| (\Phi_k z)_K \|_{\ell_1 / \ell_r}}{\| (\Phi_k z)_{K^c} \|_{\ell_1 / \ell_r}} < 1,
\end{align*}
where $(\Phi_k z)_K \in \mathbb{R}^{|K| \times k}$ denotes the $|K| \times k$ matrix obtained by retaining only the rows in $K$ from $\Phi_k z$ (similarly for $( \Phi_k z )_{K^c}$). Using this notation, we can say that the $\ell_1 / \ell_r$ central estimator can recover the correct $x[0]$ in the presence of $q$ artifacts if and only if
\[ \sup_{\substack{K \subset \{1,\ldots,p\} \\ |K|=q}} \sup_{z \in \mathbb{R}^n \setminus \{0\}} \frac{\| (\Phi_k z)_K \|_{\ell_1 / \ell_r}}{\| (\Phi_k z)_{K^c} \|_{\ell_1 / \ell_r}} < 1.\]
We now propose a way which enables the efficient assessment of the above inequality. Specifically, we know that when the chosen norm is $\ell_r = \ell_2$ and $L$ is a linear operator, we can use the minimum and maximum singular values of $L$, $\sigma_{\text{min}}$ and $\sigma_{\text{max}}$ to write $\sigma_{\text{min}} \|z\|_2 \leq \| Lz \|_2 \leq \sigma_{\text{max}} \| z \|_2$.
\par We have, from the definition of the $\ell_1 / \ell_r$ norm, $\| (\Phi_k z)_K \|_{\ell_1 / \ell_r} = \sum_{i \in K} \| (\Phi_k z)_i \|_{\ell_r}$. Denote by $\Phi_{k,i}$ the linear map from $\mathbb{R}^n$ to $\mathbb{R}^k$ such that $\Phi_{k,i} z = (\Phi_k z)_i$ for all $z \in \mathbb{R}^n$. This map can be represented by the matrix
\[ \begin{bmatrix}
\mathcal{P}_{\{i\}} CG_0 \\ \mathcal{P}_{\{i\}} CG_1 \\ \vdots \\ \mathcal{P}_{\{i\}} CG_{k-1}
\end{bmatrix},\]
where $\mathcal{P}_{\{i\}}$ is the projection map onto component $i$. Now, assume $K \subset \{1,\ldots,q\}$ is such that $|K|=q$ is fixed. The numerator of the fraction $\frac{\| (\Phi_k z)_K \|_{\ell_1 / \ell_r}}{\| (\Phi_k z)_{K^c} \|_{\ell_1 / \ell_r}}$ can be written as
\[ \| (\Phi_k z)_K \|_{\ell_1 / \ell_r} = \sum_{i \in K} \| \Phi_{k,i} z \|_{\ell_r} \leq \sum_{i \in K} \| \Phi_{k,i} \|_{\ell_r} \| z \|_{\ell_r}.\]
If $\beta = \max_{i=1,\ldots,p}\| \Phi_{k,i} \|_{\ell_r}$, then
\begin{equation}\label{eq:num_bound}
  \|(\Phi_k z)_K \|_{\ell_1 / \ell_r} \leq q\beta \| z \|_{\ell_r},  
\end{equation}
since $|K|=q$. Next, consider the denominator of $\frac{\| (\Phi_k z)_K \|_{\ell_1 / \ell_r}}{\| (\Phi_k z)_{K^c} \|_{\ell_1 / \ell_r}}$. Assume that $\ell_r = \ell_2$. This allows us to write $\| \Phi_{k,i} z \|_{\ell_2} \geq \sigma_{\text{min}} (\Phi_{k,i}) \|z\|_{\ell_2}$, where $\sigma_{\text{min}} (\Phi_{k,i}) \|z\|_{\ell_2}$ is the smallest singular value of the linear map $\Phi_{k,i}$. Next, denoting $\alpha = \min_{i=1,\ldots,p} \sigma_{\text{min}} (\Phi_{k,i})$, we get
\begin{equation}\label{eq:denom_bound}
    \| (\Phi_k z)_{K^c} \|_{\ell_1 / \ell_2} = \sum_{i \in K^c} \| \Phi_{k,i} z \|_{\ell_2} \geq (p-q) \alpha \|z\|_2.
\end{equation}
Using~\eqref{eq:num_bound} and~\eqref{eq:denom_bound}, we get
\begin{equation}
    \sup_{z \in \mathbb{R}^n \setminus \{0\}} \frac{\| (\Phi_k z)_K \|_{\ell_1 / \ell_2}}{\| (\Phi_k z)_{K^c} \|_{\ell_1 / \ell_2}} < \frac{q\beta}{(p-q)\alpha}.
\end{equation}
Thus, the $\ell_1 / \ell_r$ central estimator $\mathcal{C}_{1,r}^k$ can correctly recover $x[0]$ in $k$ time steps in the presence of $q$ artifacts if $\frac{q \beta}{(p-q) \alpha} < 1$. Rearranging this inequality yields $q < \frac{p \alpha}{\alpha + \beta}$. So, artifacts in at least $\lceil \frac{p \alpha}{\alpha + \beta} - 1 \rceil$ channels can be corrected in $k$ time steps. Simply speaking, we can do state estimation under setups where arbitrary artifacts affect at most $q$ channels.

\section{Illustrative Examples}\label{sec:examples}
In this section, we show the performance of the proposed estimator first on a pedagogical toy example, and then on a more realistic system consisting of EEG signals under possible artifacts.
\subsection{Pedagogical Example}
First, we consider the performance of the $\ell_1 / \ell_2$ central estimator on a system of size $n=p=4$ with
\[ A = \begin{bmatrix} 0 & 1 & 0 & 0 \\ 0.0021 & -0.0273 & -10.4940 & 0.8629 \\ 0 & 0 & 0 & 1 \\ 0.0053 & -0.0682 & -1.7351 & 2.1573 \end{bmatrix}, \]
$C = I_{4 \times 4}$, and the fractional-order coefficients $\alpha_1 = 0.10$, $\alpha_2 = 0.15$, $\alpha_3 = 0.60$, and $\alpha_4 = 0.70$. The choice of $C$ being the identity matrix if often suitable in physiological applications, where there are dedicated sensors to capture the evolution of each system state \cite{Xue2016}. We assume that there are arbitrary artifacts associated with some measurements of the output $y_1$, and that the artifacts are ten times the magnitude of the states. Figure~\ref{fig:toy_example} shows the comparison between the actual and estimated states which confirms the fact that the system states have been estimated correctly in the presence of artifacts. Note that we have also implicitly verified the statements in Theorems 2 and 3, since, with $p = 4$, $q = 1$, the number of measurements $\tau = 5$, $k'=4$, and $z = e_4$, where $e_4$ is the $4$-dimensional vector of ones, we have $|\mathsf{supp}(C G_0 z) \cup \ldots \cup \mathsf{supp}(C G_4 z)| = 4 > 2q = 2$ and $q \leq (p - k'/\tau + 1)/2$. The optimization problems were solved using \texttt{CVX} \cite{cvx},\cite{grant08}.
\begin{figure}[t]
    \centering
    \includegraphics[width=0.5\textwidth]{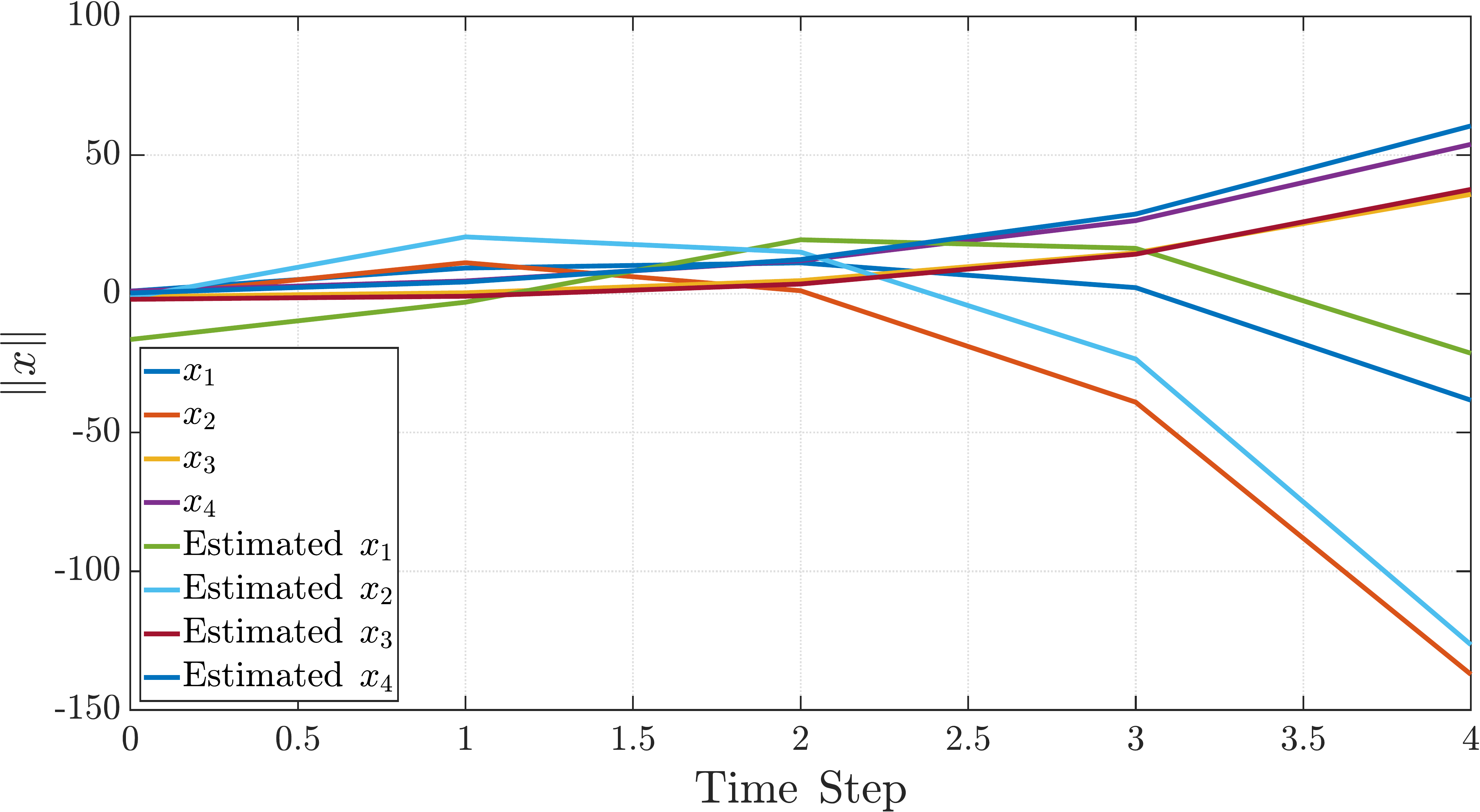}
    \caption{Actual and estimated system states for the $4 \times 4$ toy pedagogical example using the $\ell_1 / \ell_2$ central estimator.}
    \label{fig:toy_example}
\end{figure}

\begin{figure}[ht]
    \centering
    \includegraphics[width=0.4\textwidth]{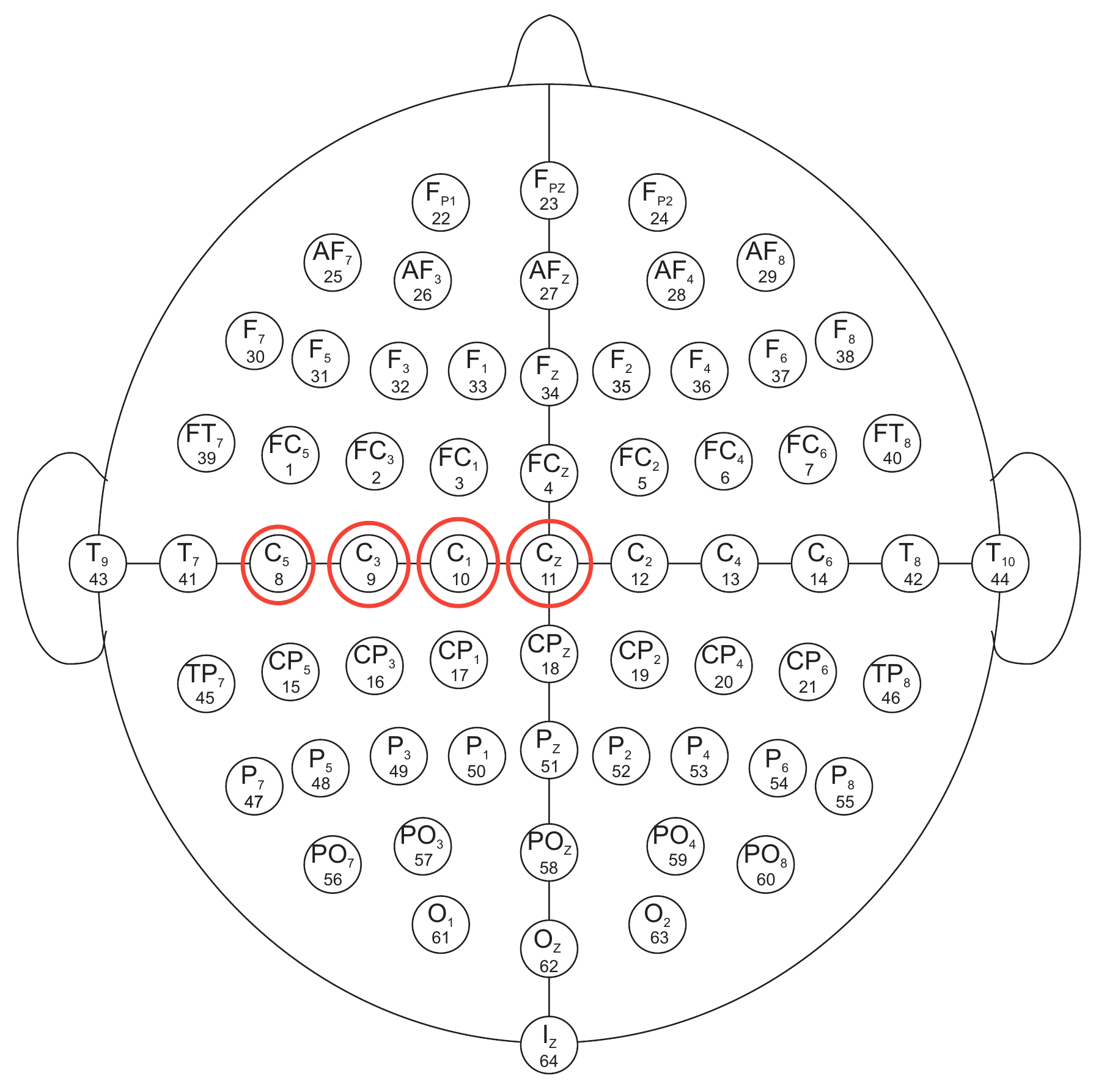}
    \caption{Sensor distribution for the measurement of EEG. The channel labels are shown along with their corresponding numbers and the selected channels are shown in red.}
    \label{fig:eeg_dist}
\end{figure}
\subsection{EEG data}
We also consider the performance of our algorithm on real neurophysiological signals. In particular, we use $150$ measurements taken from $4$ channels of a $64$-channel EEG signal which records the brain activity of subjects. The distribution of the electrodes and the selected channels, along with the corresponding labels and numbers are shown in Figure~\ref{fig:eeg_dist}. The subjects were asked to perform various motor and imagery tasks, and the channels were selected because they are over the motor cortex of the brain, and enable us to predict motor actions such as movement of the hands and feet. The data was collected using the BCI$2000$ system with a sampling rate of $160$Hz \cite{SchalkBCI}, \cite{goldberger2000physiobank}. We assume that there is an `electrode pop' artifact in the first channel that lasts for $25$ milliseconds, following which the electrode picks up noisy data only. The $150$ measurements were partitioned into windows of size $6$. This was done because the process under consideration is nonlinear, therefore, the error can become unbounded even under small perturbations, possibly due to the system identification process. The system was identified using the methods described in \cite{gupta2018}. As before, we assume that there are dedicated sensors for each system state, that is, $C = I_{4 \times 4}$.
\par Similar to the pedagogical example, we find that we can invoke the results of the paper, and, in particular, we can estimate the states with artifacts compromising one of the channels. Figure~\ref{fig:eeg_data} shows the performance of the $\ell_1 / \ell_2$ central estimator on the above data. We see that even in the presence of artifacts, we can estimate the system states fairly closely. Therefore, these simulations provide some evidence that the proposed approach might be used in the context of future neuro-wearable device applications.

\section{Conclusion}
\begin{figure}[t]
    \centering
    \includegraphics[width=0.5\textwidth]{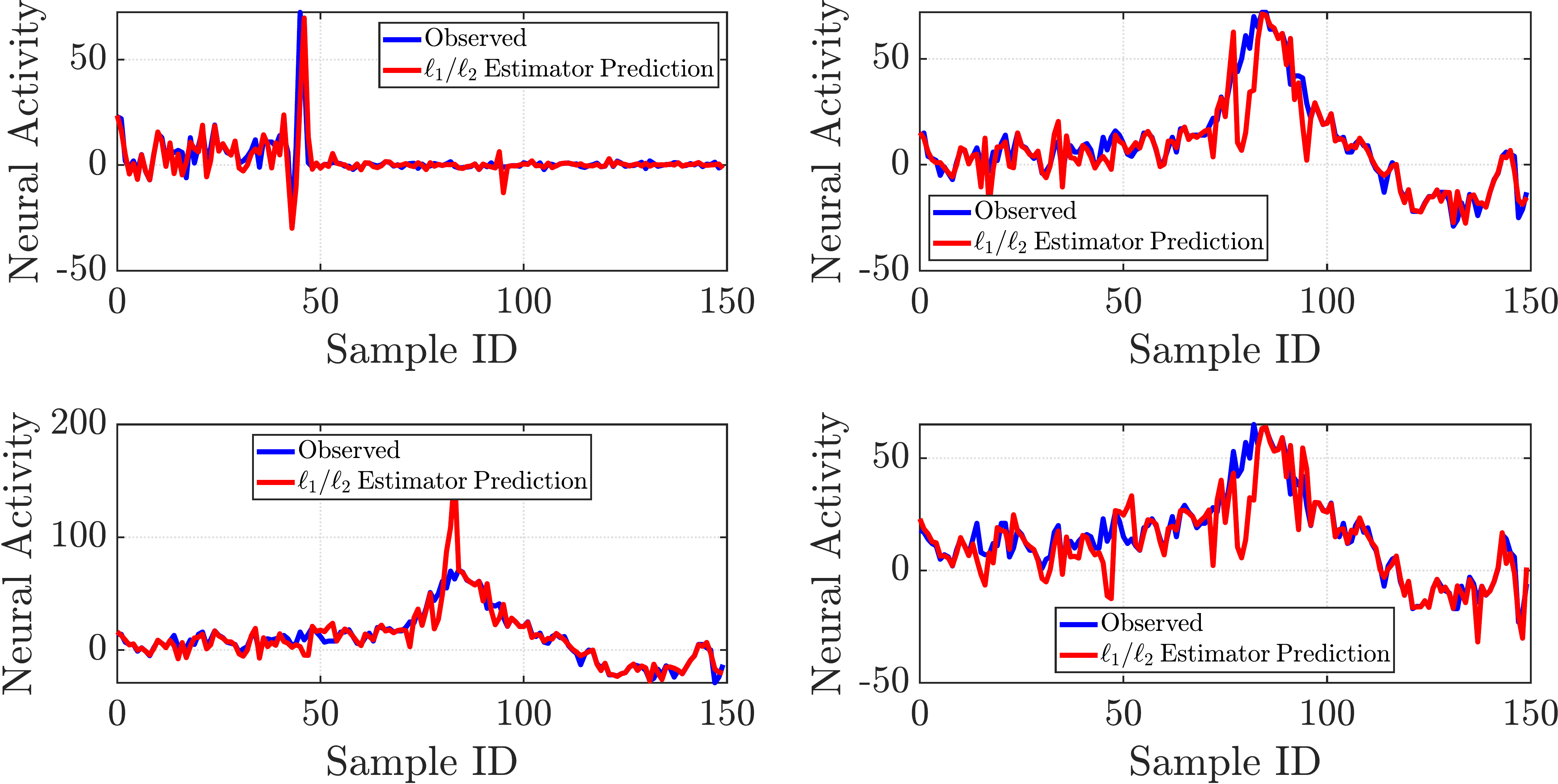}
    \caption{Performance of the $\ell_1 / \ell_2$ central estimator on data collected from a wearable EEG device.}
    \label{fig:eeg_data}
\end{figure}
In this paper, we investigated the problem of estimating the states of a discrete-time fractional-order dynamical system when there are artifacts present in some of the sensor measurements. In particular, we derive necessary and sufficient conditions that enable us to ensure that the estimation of the system states can be done even in the presence of artifacts. We further present some techniques which are inspired by tools used in compressive sensing to estimate the system states. Our results were illustrated on a toy fractional order system as well as more complex systems that represent data collected from a wearable EEG device.
\par Although FODS have found huge success in modeling the spatiotemporal properties of EEG, some of the properties accounted for by these models actually originate from unknown sources that are external to the system under consideration. Possible future work will be to model these external sources by unknown input stimuli, and then focus on state estimation of the resultant model with inputs. Also, real-time EEG activity can be monitored in order to self-regulate brain function. This is known in the literature as neurofeedback \cite{marzbani2016neurofeedback}, and it would be interesting to study how the introduction of feedback to such a system changes our perspectives on this problem.




\appendix
\textbf{Proof of Theorem 2:} (1) $\implies$ (2): Assume for the sake of contradiction that there exists a $z \in \mathbb{R}^n \setminus \{0\}$ such that $| \mathsf{supp}(CG_0 z) \cup \mathsf{supp}(CG_1 z) \cup \ldots \cup \mathsf{supp}(CG_{k-1}z) | \leq 2q$. Let $e_a[k]$ and $e_b[k]$ be such that $CG_kz = e_a[k] - e_b[k]$ with $\mathsf{supp}(e_a[k]) \subset L_a, \mathsf{supp}(e_b[k]) \subset L_b$ with $|L_a|, |L_b| \leq q$ where $L_a, L_b$ are any two subsets of $\{1,\ldots,p\}$ with cardinality less than or equal to $q$ satisfying $L_a \cup L_b = \mathsf{supp}(CG_0 z) \cup \mathsf{supp}(CG_1 z) \cup \ldots \cup \mathsf{supp}(CG_{k-1}z)$. Now, suppose for $k' \in \{0,\ldots,k-1\}$, $y[k'] = CG_{k'}z + e_b[k'] = CG_{k'}\cdot 0 + e_a[k']$. This means that if the central estimator can properly recover $x[0]$ in $k$ time steps even in the presence of $q$ artifacts, then $\mathcal{C}(y[0],\ldots,y[k-1]) = z = 0$, which is impossible since $z \neq 0$.
\par (2) $\implies$ (1): The proof follows by contradiction. Suppose that the central estimator \textit{cannot} recover $x[0]$ in $k$ time steps when there are $q$ artifacts. This immediately necessitates the existence of $x_a \neq x_b$ and error vectors $e_a[0],\ldots,e_a[k-1]$ (supported on $L_a$, with $|L_a| \leq q$) and $e_b[0],\ldots,e_b[k-1]$ (supported on $L_b$, with $|L_b| \leq q$) such that $C G_{k'} x_a + e_a[k'] = C G_{k'} x_b + e_b[k']$ for all $k' \in \{0,\ldots,k-1\}$. Now, say $z = x_a - x_b \neq 0$. If we define $L = L_a \cup L_b$, then we must necessarily have $|L| \leq 2q$, and therefore, for all $k' \in \{0,\ldots,k-1\}$, $\mathsf{supp}(C G_{k'} z) \subset L$, which contradicts (2).\qed

\textbf{Proof of Theorem 3:} Our goal will be to show that there exists a $z \neq 0$, such that $|\mathsf{supp}(CG_0z) \cup \ldots \cup \mathsf{supp}(CG_{\tau-1}z)| \leq p - \lfloor (k'-1)/\tau \rfloor$. Assume $L$ to be any subset of $\{1,\ldots,p\}$ such that $|L| = \lfloor (k'-1)/\tau \rfloor$. An obvious choice would be $L = \{ 1,\ldots,\lfloor (k'-1)/\tau \rfloor \}$. Consider the linear operator $\Phi: z \in \mathbb{R}^n \mapsto (\mathcal{P}_L CG_0z,\mathcal{P}_L CG_1z,\ldots,\mathcal{P}_L CG_{\tau-1}z) \in \mathbb{R}^{|L|\tau}$, where $\mathcal{P}_L$ is a projection onto the components of $L$. Now, since the codomain of $\Phi$ is $\mathbb{R}^{|L|\tau}$, and since $|L| = \lfloor (k'-1)/\tau \rfloor < k'/\tau$, the codomain of $\Phi$ has dimension strictly less than $k'$, which means that the nullspace of $\Phi$ has \textit{at least} one vector other than the zero vector and is nontrivial. Hence, there exists a $z \neq 0$, such that $|\mathsf{supp}(CG_0z) \cup \ldots \cup \mathsf{supp}(CG_{\tau-1}z)| \subseteq L^c$, and so, by the property of cardinality of set union, $|\mathsf{supp}(CG_0z) \cup \ldots \cup \mathsf{supp}(CG_{\tau-1}z)| \leq |L^c|  = p - \lfloor (k'-1)/\tau \rfloor$.\qed

\textbf{Proof of Theorem 4:} Assume that the ordered pair $(x[0],K)$ is not optimal in~\eqref{eq:opt_prob}. This necessitates the existence of some other initial state $x_a \neq x[0]$ and error vectors $e_a[0],\ldots,e_a[k-1]$ with $\mathsf{supp}(e_a[k]) \subset K_a$ that generate the same sequence of sensor readings $y[0],\ldots,y[k-1]$. In addition, $|K_a| \leq |K| \leq q$. Therefore, we have two different initial conditions $x_a \neq x[0]$, and two different sets of error vectors $e[0],\ldots,e[k-1]$ and $e_a[0],\ldots,e_a[k-1]$ that explain the sequence of observed sensor readings while corresponding to less than $q$ correctable artifacts. This contradicts our assumption that $(x[0],K)$ is not optimal in~\eqref{eq:opt_prob}, and we are done.\qed

\textbf{Proof of Theorem 5:} (1) $\implies$ (2): We resort to contradiction and assume that (2) does not hold. Then, there exists $K \subset \{1,\ldots,p\}$ with $|K|=q$, and $\Sha = \Phi_k z \in \mathbb{R}^{p \times k}$ with $z \neq 0$ such that $\sum_{i \in K} \|\Sha_i\|_{\ell_r} \geq \sum_{i \in K^c} \|\Sha_i\|_{\ell_r}$. Suppose $x[0]=0$ and define $K$-supported error vectors $e[k']$ for $k' \in \{ 0,1,\ldots,k-1 \}$ by
\[e_i[k'] =
    \begin{cases*}
      \Sha_{i,k'} & \text{if} $i \in K$, \\
      0 & \text{otherwise}.
    \end{cases*}\]
Now, from the output equation $y[k'] = CG_{k'}x[0] + e[k'] = e[k']$, and $Y_k$ be the matrix obtained by stacking the sensor readings $y[0],\ldots,y[k-1]$ in columns. Of course, since $K$ denotes the set of channels in which artifacts are present, $\mathsf{rowsupp}(Y_k) = K$ and $(Y_k)_i = (\Phi_k z)_i$ for all $i \in K$. Subsequently, we have
\begin{align*}
    \|Y_k - \Phi_k z\|_{\ell_1 / \ell_r} &= \sum_{i=1}^n \| (Y_k - \Phi_k z)_i \|_{\ell_r}\\
    &= \sum_{i \in K^c} \| \Sha_i \|_{\ell_r} \leq \sum_{i \in K} \| \Sha_i \|_{\ell_r}\\
    &= \sum_{i=1}^n \| (Y_k - \Phi_k x[0])_i \|_{\ell_r}\\
    &= \| Y_k - \Phi_k x[0] \|_{\ell_1 / \ell_r}.
\end{align*}
This means that there exists a $z \neq 0$ such that the value of the objective function in~\eqref{eq:convex_opt} is smaller at $z$ than at $x[0] = 0$. Therefore, $C_{1,r}^k$ fails to reconstruct the initial state from the sensor outputs, which implies that (2) must be true.
\par (2) $\implies$ (1): We suppose that (1) is not true, that is, there exists $x[0]$ with error vectors $e[0],\ldots,e[k-1]$ with $\mathsf{supp}(e[k']) \subset K$ with $|K| = q$ such that $\mathcal{C}_{1,r}^k(y[0],\ldots,y[k-1]) \neq x[0]$ with $y[k'] = CG_{k'}x[0] + e[k']$ for all $k' \in \{0,\ldots,k-1\}$. Since the central estimator fails to reconstruct $x[0]$ from the sensor outputs, this means that in the optimization problem~\eqref{eq:convex_opt}, there exists an $\hat{x} \neq x[0]$ that achieves a lower objective than $x[0]$. Next, define $z = \hat{x} - x[0] \neq 0$, $\Sha = \Phi_k z = \mathcal{U} - \mathcal{V}$ with $\mathcal{U} = Y_k - \Phi_k x[0]$ and $\mathcal{V} = Y_k - \Phi_k \hat{x}$. Then,
\[ \sum_{i \in K} \| \Sha_i \|_{\ell_r} = \sum_{i \in K} \| \mathcal{U}_i - \mathcal{V}_i \|_{\ell_r} \geq \sum_{i \in K} \| \mathcal{U}_i \|_{\ell_r} - \| \mathcal{V}_i \|_{\ell_r},\]
where the last step follows from the reverse triangle inequality for the matrix $\ell_r$-norm. Now, since $\mathsf{rowsupp}(\mathcal{U}) \subset K$ and since $\hat{x}$ achieves a lower objective in~\eqref{eq:convex_opt} than $x[0]$, $\sum_{i \in K} \| \mathcal{U}_i \|_{\ell_r} = \sum_{i=1}^n\| \mathcal{U}_i \|_{\ell_r} \geq \sum_{i=1}^n \| \mathcal{V}_i \|_{\ell_r}$. So,
\begin{align*}
    \sum_{i \in K} \| \Sha_i \|_{\ell_r} &\geq \sum_{i=1}^n \| \mathcal{V}_i \|_{\ell_r} -\sum_{i \in K} \| \mathcal{V}_i \|_{\ell_r} \\
    &= \sum_{i \in K^c} \| \mathcal{V}_i \|_{\ell_r} = \sum_{i \in K^c} \| \Sha_i \|_{\ell_r},
\end{align*}
where the last equality follows from the fact that $\mathsf{rowsupp}(\mathcal{U}) \subset K$. Hence, (2) does not hold.\qed


\bibliographystyle{IEEEtran}
\bibliography{IEEEabrv,mybibfile}

\begin{thebibliography}{10}
\providecommand{\url}[1]{#1}
\csname url@rmstyle\endcsname
\providecommand{\newblock}{\relax}
\providecommand{\bibinfo}[2]{#2}
\providecommand\BIBentrySTDinterwordspacing{\spaceskip=0pt\relax}
\providecommand\BIBentryALTinterwordstretchfactor{4}
\providecommand\BIBentryALTinterwordspacing{\spaceskip=\fontdimen2\font plus
\BIBentryALTinterwordstretchfactor\fontdimen3\font minus
  \fontdimen4\font\relax}
\providecommand\BIBforeignlanguage[2]{{%
\expandafter\ifx\csname l@#1\endcsname\relax
\typeout{** WARNING: IEEEtran.bst: No hyphenation pattern has been}%
\typeout{** loaded for the language `#1'. Using the pattern for}%
\typeout{** the default language instead.}%
\else
\language=\csname l@#1\endcsname
\fi
#2}}

\bibitem{moon2008chaotic}
F.~C. Moon, \emph{Chaotic and fractal dynamics: introduction for applied
  scientists and engineers}.\hskip 1em plus 0.5em minus 0.4em\relax John Wiley
  \& Sons, 2008.

\bibitem{lundstrom2008fractional}
B.~N. Lundstrom, M.~H. Higgs, W.~J. Spain, and A.~L. Fairhall, ``Fractional
  differentiation by neocortical pyramidal neurons,'' \emph{Nature
  \text{N}euroscience}, vol.~11, no.~11, p. 1335, 2008.

\bibitem{werner2010fractals}
G.~Werner, ``Fractals in the nervous system: conceptual implications for
  theoretical neuroscience,'' \emph{Frontiers in \text{P}hysiology}, vol.~1,
  p.~15, 2010.

\bibitem{turcott1996fractal}
R.~G. Turcott and M.~C. Teich, ``Fractal character of the electrocardiogram:
  distinguishing heart-failure and normal patients,'' \emph{Annals of
  \text{B}iomedical \text{E}ngineering}, vol.~24, no.~2, pp. 269--293, 1996.

\bibitem{thurner2003scaling}
S.~Thurner, C.~Windischberger, E.~Moser, P.~Walla, and M.~Barth, ``Scaling laws
  and persistence in human brain activity,'' \emph{Physica A: Statistical
  Mechanics and its Applications}, vol. 326, no. 3-4, pp. 511--521, 2003.

\bibitem{teich1997fractal}
M.~C. Teich, C.~Heneghan, S.~B. Lowen, T.~Ozaki, and E.~Kaplan, ``Fractal
  character of the neural spike train in the visual system of the cat,''
  \emph{Journal of the \text{O}ptical \text{S}ociety of \text{A}merica},
  vol.~14, no.~3, pp. 529--546, 1997.

\bibitem{chen2010anomalous}
W.~Chen, H.~Sun, X.~Zhang, and D.~Koro{\v{s}}ak, ``Anomalous diffusion modeling
  by fractal and fractional derivatives,'' \emph{Computers \& Mathematics with
  Applications}, vol.~59, no.~5, pp. 1754--1758, 2010.

\bibitem{jaishankar2012power}
A.~Jaishankar and G.~H. McKinley, ``Power-law rheology in the bulk and at the
  interface: quasi-properties and fractional constitutive equations,''
  \emph{Proceedings of the Royal Society of London A: Mathematical, Physical
  and Engineering Sciences}, vol. 469, no. 2149, 2013.

\bibitem{petravs2011fractional}
I.~Petr{\'a}{\v{s}}, ``Fractional-order chaotic systems,'' in
  \emph{Fractional-order nonlinear systems}.\hskip 1em plus 0.5em minus
  0.4em\relax Springer, 2011, pp. 103--184.

\bibitem{west2014networks}
B.~J. West, M.~Turalska, and P.~Grigolini, \emph{Networks of echoes: imitation,
  innovation and invisible leaders}.\hskip 1em plus 0.5em minus 0.4em\relax
  Springer Science \& Business Media, 2014.

\bibitem{xuecps}
Y.~Xue, S.~Rodriguez, and P.~Bogdan, ``A spatio-temporal fractal model for a
  cps approach to brain-machine-body interfaces,'' in \emph{2016 Design,
  Automation Test in Europe Conference Exhibition (DATE)}, March 2016, pp.
  642--647.

\bibitem{xue2017reliable}
Y.~Xue and P.~Bogdan, ``Reliable multi-fractal characterization of weighted
  complex networks: algorithms and implications,'' \emph{Scientific
  \text{R}eports}, vol.~7, no.~1, p. 7487, 2017.

\bibitem{magin2006fractional}
R.~L. Magin, \emph{Fractional calculus in bioengineering}.\hskip 1em plus 0.5em
  minus 0.4em\relax Begell House Redding, 2006.

\bibitem{baleanu2011fractional}
D.~Baleanu, J.~A.~T. Machado, and A.~C. Luo, \emph{Fractional dynamics and
  control}.\hskip 1em plus 0.5em minus 0.4em\relax Springer Science \& Business
  Media, 2011.

\bibitem{britton2016electroencephalography}
J.~W. Britton, L.~C. Frey, J.~Hopp, P.~Korb, M.~Koubeissi, W.~Lievens,
  E.~Pestana-Knight, and E.~L. St, \emph{Electroencephalography (EEG): An
  introductory text and atlas of normal and abnormal findings in adults,
  children, and infants}.\hskip 1em plus 0.5em minus 0.4em\relax American
  Epilepsy Society, Chicago, 2016.

\bibitem{shoukry2017secure}
Y.~Shoukry, P.~Nuzzo, A.~Puggelli, A.~L. Sangiovanni-Vincentelli, S.~A. Seshia,
  and P.~Tabuada, ``Secure state estimation for cyber-physical systems under
  sensor attacks: A satisfiability modulo theory approach,'' \emph{IEEE
  Transactions on Automatic Control}, vol.~62, no.~10, pp. 4917--4932, 2017.

\bibitem{pajic2014robustness}
M.~Pajic, J.~Weimer, N.~Bezzo, P.~Tabuada, O.~Sokolsky, I.~Lee, and G.~J.
  Pappas, ``Robustness of attack-resilient state estimators,'' in
  \emph{Proceedings of the ACM/IEEE 5th International Conference on
  Cyber-Physical Systems}.\hskip 1em plus 0.5em minus 0.4em\relax IEEE Computer
  Society, 2014, pp. 163--174.

\bibitem{pajic2017attack}
M.~Pajic, I.~Lee, and G.~J. Pappas, ``Attack-resilient state estimation for
  noisy dynamical systems,'' \emph{IEEE Transactions on Control of Network
  Systems}, vol.~4, no.~1, pp. 82--92, 2017.

\bibitem{hu2018state}
L.~Hu, Z.~Wang, Q.-L. Han, and X.~Liu, ``State estimation under false data
  injection attacks: Security analysis and system protection,''
  \emph{Automatica}, vol.~87, pp. 176--183, 2018.

\bibitem{mishra2015secure}
S.~Mishra, Y.~Shoukry, N.~Karamchandani, S.~Diggavi, and P.~Tabuada, ``Secure
  state estimation: Optimal guarantees against sensor attacks in the presence
  of noise,'' in \emph{Proceedings of the IEEE International Symposium on
  Information Theory (ISIT)}.\hskip 1em plus 0.5em minus 0.4em\relax IEEE,
  2015, pp. 2929--2933.

\bibitem{shi2017finite}
D.~Shi, R.~J. Elliott, T.~Chen, \emph{et~al.}, ``On finite-state stochastic
  modeling and secure estimation of cyber-physical systems.'' \emph{IEEE
  Transactions on Automatic Control}, vol.~62, no.~1, pp. 65--80, 2017.

\bibitem{an2018secure}
L.~An and G.-H. Yang, ``Secure state estimation against sparse sensor attacks
  with adaptive switching mechanism,'' \emph{IEEE Transactions on Automatic
  Control}, vol.~63, no.~8, pp. 2596--2603, 2018.

\bibitem{mo2015secure}
Y.~Mo and B.~Sinopoli, ``Secure estimation in the presence of integrity
  attacks,'' \emph{IEEE Transactions on Automatic Control}, vol.~60, no.~4, pp.
  1145--1151, 2015.

\bibitem{forti2017secure}
N.~Forti, G.~Battistelli, L.~Chisci, and B.~Sinopoli, ``Secure state estimation
  of cyber-physical systems under switching attacks,''
  \emph{IFAC-PapersOnLine}, vol.~50, no.~1, pp. 4979--4986, 2017.

\bibitem{pasqualetti2013attack}
F.~Pasqualetti, F.~D{\"o}rfler, and F.~Bullo, ``Attack detection and
  identification in cyber-physical systems,'' \emph{IEEE Transactions on
  Automatic Control}, vol.~58, no.~11, pp. 2715--2729, 2013.

\bibitem{fawzi2014secure}
H.~Fawzi, P.~Tabuada, and S.~Diggavi, ``Secure estimation and control for
  cyber-physical systems under adversarial attacks,'' \emph{IEEE Transactions
  on Automatic Control}, vol.~59, no.~6, pp. 1454--1467, 2014.

\bibitem{ding2018survey}
D.~Ding, Q.-L. Han, Y.~Xiang, X.~Ge, and X.-M. Zhang, ``A survey on security
  control and attack detection for industrial cyber-physical systems,''
  \emph{Neurocomputing}, vol. 275, pp. 1674--1683, 2018.

\bibitem{fawzi2011}
H.~Fawzi, P.~Tabuada, and S.~Diggavi, ``Secure state-estimation for dynamical
  systems under active adversaries,'' in \emph{Proceedings of the 49th Annual
  Allerton Conference on Communication, Control, and Computing}, Sept 2011, pp.
  337--344.

\bibitem{mo2014resilient}
Y.~Mo, J.~P. Hespanha, and B.~Sinopoli, ``Resilient detection in the presence
  of integrity attacks,'' \emph{IEEE Transactions on Signal Processing},
  vol.~62, no.~1, pp. 31--43, 2014.

\bibitem{sabatier2012observability}
J.~Sabatier, C.~Farges, M.~Merveillaut, and L.~Feneteau, ``On observability and
  pseudo state estimation of fractional order systems,'' \emph{European
  \text{J}ournal of \text{C}ontrol}, vol.~18, no.~3, pp. 260--271, 2012.

\bibitem{sierociuk2006fractional}
D.~Sierociuk and A.~Dzieli{\'n}ski, ``Fractional kalman filter algorithm for
  the states, parameters and order of fractional system estimation,''
  \emph{International Journal of Applied Mathematics and Computer Science},
  vol.~16, pp. 129--140, 2006.

\bibitem{Safari1}
B.~Safarinejadian, N.~Kianpour, and M.~Asad, ``State estimation in
  fractional-order systems with coloured measurement noise,''
  \emph{Transactions of the Institute of Measurement and Control}, vol.~40,
  no.~6, pp. 1819--1835, 2018.

\bibitem{Safari2}
B.~Safarinejadian, M.~Asad, and M.~S. Sadeghi, ``Simultaneous state estimation
  and parameter identification in linear fractional order systems using
  coloured measurement noise,'' \emph{International Journal of Control},
  vol.~89, no.~11, pp. 2277--2296, 2016.

\bibitem{miljkovic2017ecg}
N.~Miljkovi{\'c}, N.~Popovi{\'c}, O.~Djordjevi{\'c}, L.~Konstantinovi{\'c}, and
  T.~B. {\v{S}}ekara, ``Ecg artifact cancellation in surface emg signals by
  fractional order calculus application,'' \emph{Computer methods and programs
  in biomedicine}, vol. 140, pp. 259--264, 2017.

\bibitem{DzielinskiFOS}
A.~Dzielinski and D.~Sierociuk, ``Adaptive feedback control of fractional order
  discrete state-space systems,'' in \emph{Proceedings of the International
  Conference on Computational Intelligence for Modelling, Control and
  Automation and International Conference on Intelligent Agents, Web
  Technologies and Internet Commerce (CIMCA-IAWTIC'06)}, vol.~1, Nov 2005, pp.
  804--809.

\bibitem{Debener2010}
S.~Debener, C.~Kranczioch, and I.~Gutberlet, \emph{EEG Quality: Origin and
  Reduction of the EEG Cardiac-Related Artefact}, C.~Mulert and L.~Lemieux,
  Eds.\hskip 1em plus 0.5em minus 0.4em\relax Berlin, Heidelberg: Springer
  Berlin Heidelberg, 2010.

\bibitem{gupta2018}
G.~Gupta, S.~Pequito, and P.~Bogdan, ``Dealing with unknown unknowns:
  Identification and selection of minimal sensing for fractional dynamics with
  unknown inputs,'' in \emph{Proceedings of the American Control Conference},
  June 2018, pp. 2814--2820.

\bibitem{guermah2008}
S.~Guermah, S.~Djennoune, and M.~Bettayeb, ``Controllability and observability
  of linear discrete-time fractional-order systems,'' \emph{International
  Journal of Applied Mathematics and Computer Science}, vol.~18, no.~2, pp.
  213--222, 2008.

\bibitem{guru2008}
V.~Guruswami, J.~R. Lee, and A.~Wigderson, ``Euclidean sections of $\ell_1^n$
  with sublinear randomness and error-correction over the reals,'' in
  \emph{Approximation, Randomization and Combinatorial Optimization. Algorithms
  and Techniques}, A.~Goel, K.~Jansen, J.~D.~P. Rolim, and R.~Rubinfeld,
  Eds.\hskip 1em plus 0.5em minus 0.4em\relax Berlin, Heidelberg: Springer
  Berlin Heidelberg, 2008, pp. 444--454.

\bibitem{CandesTao}
E.~J. Candes and T.~Tao, ``Decoding by linear programming,'' \emph{IEEE
  Transactions on Information Theory}, vol.~51, no.~12, pp. 4203--4215, Dec
  2005.

\bibitem{davenport2011}
M.~A. Davenport, M.~F. Duarte, Y.~C. Eldar, and G.~Kutyniok, ``Introduction to
  compressed sensing,'' \emph{Compressed Sensing: Theory and Applications},
  vol.~93, no.~1, p.~2, 2011.

\bibitem{Xue2016}
Y.~Xue, S.~Pequito, J.~R. Coelho, P.~Bogdan, and G.~J. Pappas, ``Minimum number
  of sensors to ensure observability of physiological systems: A case study,''
  in \emph{Proceedings of the 54th Annual Allerton Conference on Communication,
  Control, and Computing}, Sept 2016, pp. 1181--1188.

\bibitem{cvx}
I.~CVX~Research, ``{CVX}: Matlab software for disciplined convex programming,
  version 2.0,'' \url{http://cvxr.com/cvx}, Aug. 2012.

\bibitem{grant08}
M.~Grant and S.~Boyd, ``Graph implementations for nonsmooth convex programs,''
  in \emph{Recent Advances in Learning and Control}, ser. Lecture Notes in
  Control and Information Sciences, V.~Blondel, S.~Boyd, and H.~Kimura,
  Eds.\hskip 1em plus 0.5em minus 0.4em\relax Springer-Verlag Limited, 2008,
  pp. 95--110.

\bibitem{SchalkBCI}
G.~Schalk, D.~J. McFarland, T.~Hinterberger, N.~Birbaumer, and J.~R. Wolpaw,
  ``\text{BCI}2000: a general-purpose brain-computer interface (\text{BCI})
  system,'' \emph{IEEE Transactions on Biomedical Engineering}, vol.~51, no.~6,
  pp. 1034--1043, June 2004.

\bibitem{goldberger2000physiobank}
A.~L. Goldberger, L.~A. Amaral, L.~Glass, J.~M. Hausdorff, P.~C. Ivanov, R.~G.
  Mark, J.~E. Mietus, G.~B. Moody, C.-K. Peng, and H.~E. Stanley, ``Physiobank,
  physiotoolkit, and physionet: components of a new research resource for
  complex physiologic signals,'' \emph{Circulation}, vol. 101, no.~23, pp.
  e215--e220, 2000.

\bibitem{marzbani2016neurofeedback}
H.~Marzbani, H.~R. Marateb, and M.~Mansourian, ``Neurofeedback: a comprehensive
  review on system design, methodology and clinical applications,'' \emph{Basic
  and \text{C}linical \text{N}euroscience}, vol.~7, no.~2, p. 143, 2016.

\end{thebibliography}

\end{document}